\newtheorem{theorem}{Theorem}[section]
\newtheorem{lemma}[theorem]{Lemma}
\newtheorem{corollary}[theorem]{Corollary}
\newtheorem{claim}[theorem]{Claim}
\theoremstyle{definition}
\theoremstyle{remark}
\newtheorem{remark}[theorem]{Remark}
\numberwithin{equation}{section}
\newcommand{\Ext}{\mathrm{Ext}}
\newcommand{\End}{\mathrm{End}}
\newcommand{\Hom}{\mathrm{Hom}}
\newcommand{\Ca}{\mathcal{C}}
\newcommand{\Fun}{\mathrm{F}}
\newcommand{\Def}{\mathrm{Def}}
\newcommand{\Sets}{\mathrm{Sets}}
\newcommand{\Ob}{\mathrm{Ob}}
\newcommand{\Z}{\mathbb{Z}}
\newcommand{\SEnd}{\underline{\End}}
\newcommand{\A}{\Lambda}
\newcommand{\m}{\mathfrak{m}}
\renewcommand{\k}{\Bbbk}
\newcommand{\invlim}{\varprojlim}
\begin{document}

% \title[short text for running head]{full title}
\title{On universal deformation rings and stable homogeneous tubes}

%    Only \author and \address are required; other information is
%    optional.  Remove any unused author tags.

%    author one information
% \author[short version for running head]{name for top of paper}

%    author one information
\author[Caranguay-Mainguez]{Jhony F. Caranguay-Mainguez}
\address{Instituto de Matem\'aticas, Universidad de Antioquia, Medell\'{\i}n. Colombia}
\curraddr{}
\email{jhony.caranguay@udea.edu.co}
\thanks{}

%    author two information
\author[Rizzo]{Pedro Rizzo}
\address{Instituto de Matem\'aticas, Universidad de Antioquia, Medell\'{\i}n. Colombia}
\curraddr{}
\email{pedro.hernandez@udea.edu.co}
\thanks{}

%    author three information
\author[V\'elez-Marulanda]{Jos\'e A. V\'elez-Marulanda}
\address{Department of Applied Mathematics \& Physics, Valdosta State University, GA, USA}
\address{Facultad de Matem\'aticas e Ingenier\'{\i}as, Fundaci\'on Universitaria Konrad Lorenz, Bogot\'a, Colombia}

\curraddr{}
\email{javelezmarulanda@valdosta.edu}
\thanks{}

%    \subjclass is required.
\subjclass[2020]{16G10 \and 16G20 \and 16G50}

\date{}

\dedicatory{}

%    "Communicated by" -- provide editor's name; required.
\commby{}

%    Abstract is required.
\begin{abstract}
Let $\k$ be a field of any characteristic and let $\A$ be a finite dimensional $\k$-algebra. We prove that if $V$ is a finite dimensional right $\A$-module that lies in the mouth of a stable homogeneous tube $\mathfrak{T}$ of the Auslander-Reiten quiver $\A$ with $\SEnd_\A(V)$ a division ring, then $V$ has a versal deformation ring $R(\A,V)$ isomorphic to $\k[\![t]\!]$. As consequence we obtain that if $\k$ is algebraically closed, $\A$ is a symmetric special biserial $\k$-algebra and $V$ is a band $\A$-module with $\SEnd_\A(V) \cong \k$ that lies in the mouth of its homogeneous tube, then $R(\A,V)$ is universal and isomorphic to $\k[\![t]\!]$.     
\end{abstract}

\maketitle
\renewcommand{\labelenumi}{\textup{(\roman{enumi})}}
\renewcommand{\labelenumii}{\textup{(\roman{enumi}.\alph{enumii})}}

\section{Introduction}\label{sec1}

Throughout this article, we assume that $\k$ is a field of any characteristic. We denote by $\widehat{\Ca}$ the category of all complete local commutative Noetherian $\k$-algebras with residue field $\k$. In particular, the morphisms in $\widehat{\Ca}$ are continuous $\k$-algebra homomorphisms that induce the identity map on $\k$. Let $\A$ be a finite dimensional $\k$-algebra and let $V$ be a finite dimensional right $\A$-module. It was proved in \cite[Prop. 2.1]{blehervelez} that  $V$ has a well-defined versal deformation ring $R(\A,V)$ which is an object in $\widehat{\Ca}$ and which is universal provided that  $\End_\A(V)$, the endomorphism ring of $V$, is isomorphic to $\k$. Moreover, it follows from  \cite[Prop. 2.5]{blehervelez} that versal deformation rings are preserved under Morita equivalences. If we assume further that $V$ is a non-projective indecomposable Gorenstein-projective right $\A$-module whose stable endomorphism ring $\SEnd_\A(V)$ is isomorphic to $\k$, then it follows from \cite[Thm. 1.2 (ii) ]{bekkert-giraldo-velez} and \cite[Thm. 1.2 (i)]{velez4} that $R(\A,V)$ is also universal and stable under syzygies.  Moreover, it follows from \cite[Thm. 1.2 (ii)]{velez4} that under this situation, $R(\A, V)$ is preserved under singular equivalences of Morita type with level (in the sense of \cite{wang}) between Gorenstein $\k$-algebras. These results have been used in \cite{bekkert-giraldo-velez, velez2,velez3,velez4} to classify universal deformation rings of finite dimensional Gorenstein-proyective modules over gentle, skew-gentle and Nakayama algebras with  all these algebras of infinite global dimension. On the other hand, the aforementioned results also apply to self-injective $\k$-algebras and stable equivalences of Morita type (in the sense of \cite{broue}). An important class of self-injective $\k$-algebras are the symmetric special biserial (see e.g. \cite[\S 2.3]{schroll}). It follows from the results in \cite{buri} that if $\A$ is a symmetric special biserial, then every non-projective indecomposable right $\A$-module $V$ is either a string or a band module. Under this situation, it follows from e.g. \cite[\S II.4.3]{erdmann} that the band modules lie in stable homogeneous tubes i.e. components of the stable Auslander-Reiten quiver of the form $\Z\mathbb{A}_\infty/\langle \tau_\A\rangle$, where $\tau_\A$ is the Auslander-Reiten translation (see \cite[\S 3.1]{ringel2} and \cite[\S X.1]{simson} and Figure \ref{fig1}). We refer to the reader to \cite[\S 2.2]{schroll2} for a description of such string and band modules.  Assume for now that $\k$ is algebraically closed, that $\A$ is a symmetric special biserial algebra and $V$ is a non-projective right $\A$-module such that $\SEnd_\A(V) \cong \k$. If $V$ is a string $\A$-module, then $R(\A,V)$ has been determined under many cases for $\A$ (see e.g. \cite{bleher9,blehervelez, bleher15, calderon-giraldo-rueda-velez, meyer, velez}). If $V$ is a band module, then $R(\A,V)$ has been determined e.g when $\A$ is a certain algebra of dihedral type (see \cite[Thm. 1.2 (iii)]{blehervelez}), when $\A$ is of dihedral type of polynomial growth (see \cite[Thm. 1.1 (iii)]{bleher9}) and for generalized Brauer tree algebras of polynomial growth (see \cite[Prop. 5.3]{meyer}). In all these cases, turned out that $V$ belongs to the mouth of its stable homogeneous tube and that $R(\A,V)\cong \k[\![t]\!]$. 

Our aim is to prove the following result. 

\begin{theorem}\label{thm1}
Let $\A$ be a finite dimensional $\k$-algebra where $\k$ is a field of arbitrary characteristic. Assume that $V= V[1]$ is an indecomposable finite dimensional right $\A$-module such that $\SEnd_\A(V)$ is a division ring and which lies in the mouth of a stable homogeneous tube $\mathfrak{T}$ of the Auslander-Reiten quiver of $\A$ (as in Figure \ref{fig1}) and such that for all $\ell \geq 2$, there exists a short exact sequence of right $\A$-modules
\begin{equation}\label{seslong}
0\to V[\ell-1]\to V[\ell]\to V[1]\to 0. 
\end{equation}
Then the versal deformation ring $R(\A,V)$ is isomorphic to $\k[\![t]\!]$. 
\end{theorem}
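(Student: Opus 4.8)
The plan is to show that the versal deformation functor of $V$ is pro-represented by $\k[\![t]\!]$ by establishing two things: (i) the tangent space $\Ext^1_\A(V,V)$ is one-dimensional over $\k$ (so that $R(\A,V)$ is a quotient of $\k[\![t]\!]$), and (ii) the deformation $V[\ell]$ (reduced modulo $t^{\ell+1}$) defines a compatible system of deformations over $\k[t]/(t^{\ell+1})$ that is in fact \emph{non-obstructed} at every stage, forcing $R(\A,V)\cong\k[\![t]\!]$.

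First I would compute the tangent space. Since $V$ lies in the mouth of a stable homogeneous tube, the almost split sequence ending in $V$ has the form $0\to V\to V[2]\to V\to 0$ (the middle term being the next module up the tube, which by hypothesis fits into \eqref{seslong} with $\ell=2$). Applying $\Hom_\A(V,-)$ to this sequence and using that $\SEnd_\A(V)$ is a division ring, I would deduce via the standard Auslander–Reiten formula argument — $\underline{\Hom}_\A(V,V)\cong D\Ext^1_\A(V,V)$ when $V\cong\tau_\A V$, or more carefully, that the connecting map detects exactly the non-split self-extension — that $\Ext^1_\A(V,V)$ is one-dimensional over the division ring $\SEnd_\A(V)$, hence over $\k$ it has dimension equal to $[\SEnd_\A(V):\k]$; here I will need the hypothesis that brings this down, so more likely the relevant statement is that $\dim_\k\Ext^1_\A(V,V)=1$ directly because $V$ is at the mouth and the tube is homogeneous and stable, which pins the AR sequence to the displayed short exact sequence with $\ell=2$. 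This gives a surjection $\k[\![t]\!]\surjection R(\A,V)$.

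Next I would build an explicit lift. Using the short exact sequences \eqref{seslong} inductively, I would assemble the modules $V[\ell]$ into a single module structure $\widetilde V$ over $\k[\![t]\!]\,\widehat\otimes_\k\A$ (or rather show compatibly that $V[\ell]$ is a deformation of $V$ over $R_\ell:=\k[t]/(t^{\ell+1})$): the point is that $V[\ell]$ is free of rank $\dim_\k V$ over $R_\ell$, that $t$ acts as the composite $V[\ell]\surjection V[1]\injection V[\ell]$ (the first map from \eqref{seslong}, the second the inclusion $V[1]=V[\ell-1]/\cdots$... — here one must be careful to identify a consistent nilpotent endomorphism $\theta_\ell$ of $V[\ell]$ with $\theta_\ell^{\ell}\neq 0$, $\theta_\ell^{\ell+1}=0$, coming from the tube structure), and that $V[\ell]\otimes_{R_\ell}\k\cong V$. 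Passing to the inverse limit yields a deformation over $\k[\![t]\!]$. By versality this deformation is induced from $R(\A,V)$ via some $\k[\![t]\!]\to R(\A,V)\to \k[\![t]\!]$ whose composite must be... — more cleanly, the existence of deformations over every $R_\ell$ that are pairwise non-isomorphic (distinguished by the nilpotency degree of the action of $t$) shows $R(\A,V)$ surjects onto every $\k[t]/(t^{\ell+1})$, so $R(\A,V)$ has a quotient isomorphic to $\k[\![t]\!]$; combined with step one's surjection $\k[\![t]\!]\surjection R(\A,V)$ and the fact that $\k[\![t]\!]$ is Noetherian (so has no proper self-surjections), we get $R(\A,V)\cong\k[\![t]\!]$.

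The main obstacle I anticipate is step two: verifying carefully that the $V[\ell]$ are genuine deformations, i.e. constructing the $R_\ell$-module structures coherently and checking flatness (freeness) over $R_\ell$ together with the reduction $V[\ell]\otimes_{R_\ell}\k\cong V$. The short exact sequences \eqref{seslong} give the $\k$-linear skeleton, but promoting them to a flat $\k[\![t]\!]$-family requires exhibiting the endomorphism $\theta_\ell$ explicitly and checking that $\ker\theta_\ell = \Im\theta_\ell^{\ell}$ at each stage — this is where the mouth-of-the-tube hypothesis and the division-ring condition on $\SEnd_\A(V)$ do the real work, ruling out any extra obstruction classes in $\Ext^2$ or any splitting. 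A secondary subtlety is making sure the tangent-space computation genuinely gives dimension exactly one (not just $\leq 1$ or a bound in terms of $[\SEnd_\A(V):\k]$); the lower bound comes for free from the non-split sequence \eqref{seslong} with $\ell=2$, and the upper bound from the mouth position in the homogeneous tube.
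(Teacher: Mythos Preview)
Your outline matches the paper's proof almost step for step: show $\dim_\k\Ext^1_\A(V,V)=1$, exhibit $V[\ell]$ as a lift over $\k[\![t]\!]/(t^\ell)$, then conclude $R(\A,V)\cong\k[\![t]\!]$ by squeezing between the two surjections. Two places where your sketch is loose are resolved cleanly in the paper.

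\textbf{The $t$-action.} Your proposed action $V[\ell]\surjection V[1]\injection V[\ell]$ is wrong: that endomorphism has image $V[1]$, so already $t^2$ would factor through a map $V[1]\to V[1]$ and you lose control of the nilpotency degree. The paper instead lets $t$ act as the composite $\iota_{\ell-1}\circ\pi_{\ell-1}:V[\ell]\to V[\ell-1]\to V[\ell]$ of the two irreducible maps in the tube. Then $tV[\ell]\cong V[\ell-1]$, $t^kV[\ell]\cong V[\ell-k]$, and $t^\ell V[\ell]=0$; the short exact sequence \eqref{seslong} gives $V[\ell]/tV[\ell]\cong V$, and freeness over $\k[\![t]\!]/(t^\ell)$ follows by lifting a $\k$-basis of $V$. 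You flagged exactly this as the main obstacle; the irreducible maps from the tube structure are what makes it work.

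\textbf{The tangent space.} Your worry about $[\SEnd_\A(V):\k]$ and the Auslander--Reiten duality formula is bypassed in the paper: since $\SEnd_\A(V)$ is a division ring, \cite[Cor.~V.2.4]{auslander} says every non-split self-extension of $V$ is almost split, and almost split sequences are unique up to equivalence \cite[Thm.~V.1.16]{auslander}. That gives $\dim_\k\Ext^1_\A(V,V)=1$ directly.

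\textbf{The endgame.} Rather than arguing that $R(\A,V)$ surjects onto each $\k[\![t]\!]/(t^\ell)$ separately (where surjectivity of each versal map needs its own justification), the paper passes to the inverse limit $W=\varprojlim_\ell V[\ell]$, which is a lift over $\k[\![t]\!]$ itself. Versality produces $\theta:R(\A,V)\to\k[\![t]\!]$; composing with the projection to $\k[\![t]\!]/(t^2)$ recovers the unique map classifying the non-trivial lift $V[2]$, which is surjective on cotangent spaces, hence so is $\theta$. Since $R(\A,V)$ is already a quotient of $\k[\![t]\!]$, $\theta$ is an isomorphism.
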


It follows from \cite[\S 2.2 Theorem (a)]{simson} that if $\End_\A(V)\cong \k$ with $\k$ algebraically closed and $\Ext_\A^2(V,V) = 0$ (which is always true when $\A$ is hereditary), then there exists such short exact sequence as in (\ref{seslong}). By \cite[Prop. 2.1]{blehervelez} and Theorem \ref{thm1}, in this situation we obtain that $R(\A,V)$ is universal and isomorphic to  $\k[\![t]\!]$. More precisely, recall from \cite[\S X]{simson} that an indecomposable right $\A$-module $V$ is {\it regular} if it belongs neither to the post-projective component $\mathcal{P}(\A)$ nor to the pre-injective component $\mathcal{Q}(\A)$ of the Auslander-Reiten quiver of $\A$. Following \cite[\S XI.2.5 Definition]{simson}, $V$ is {\it simple regular} if $V$ is a regular right $\A$-module having no proper regular submodules. It follows from the arguments in the proof of \cite[\S XI.2.8 Theorem]{simson} that if $V$ is a simple regular module, then $V$ has endomorphism ring isomorphic to $\k$ and that $V$ lies in the mouth of its stable tube. The following result is again a direct consequence of \cite[Prop. 2.5]{blehervelez}, Theorem \ref{thm1} and \cite[\S XIII.2.24]{simson}. 
 
\begin{corollary}\label{cor2}
Assume that $\k$ is algebraically closed and that $\A$ is the path algebra $\k\Delta$, where $\Delta$ is one of the canonical oriented quivers $\Delta(\widetilde{\mathbb{A}}_{p,q})$, with $p,q\geq 1$, $\Delta(\widetilde{\mathbb{D}}_{m})$ with $m\geq 4$, and $\Delta(\widetilde{\mathbb{E}}_\ell)$ with $\ell = 6, 7, 8$ as in Figure \ref{fig2}. Then for all $\lambda \in \k^\ast$ there exists an homogeneous tube $\mathfrak{T}_\lambda$ of the Auslander-Reiten quiver of $\A$ such that it contains a simple regular $\k\Delta$-module $E^{(\lambda)}$ that lies in its mouth and which has endomorphism ring isomorphic to $\k$.  In this situation, the versal deformation ring $R(\A, E^{(\lambda)})$ is universal and isomorphic to $\k[\![t]\!]$.  
\end{corollary}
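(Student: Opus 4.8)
The plan is to exhibit $E^{(\lambda)}$ as a simple regular module lying in the mouth of a homogeneous tube and then to check, one by one, the hypotheses of Theorem~\ref{thm1}; once that is done, universality is automatic. Since $\Delta$ is one of the canonical Euclidean quivers $\Delta(\widetilde{\mathbb{A}}_{p,q})$, $\Delta(\widetilde{\mathbb{D}}_m)$, $\Delta(\widetilde{\mathbb{E}}_\ell)$ of Figure~\ref{fig2}, the path algebra $\A=\k\Delta$ is a connected tame hereditary $\k$-algebra. By \cite[\S XIII.2.24]{simson}, the regular part of its Auslander--Reiten quiver is a $\mathbb{P}^1(\k)$-indexed family of pairwise orthogonal stable tubes, all but finitely many of which are homogeneous, and with a suitable labelling there is, for every $\lambda\in\k^\ast$, a homogeneous tube $\mathfrak{T}_\lambda$ containing a simple regular $\k\Delta$-module $E^{(\lambda)}$ in its mouth. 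I would write $E^{(\lambda)}[\ell]$ for the indecomposable regular module of regular length $\ell$ in $\mathfrak{T}_\lambda$, so that $E^{(\lambda)}=E^{(\lambda)}[1]$. By the arguments in the proof of \cite[\S XI.2.8 Theorem]{simson} recalled in \S\ref{sec1}, $E^{(\lambda)}$ is then an indecomposable finite dimensional $\k\Delta$-module lying in the mouth of $\mathfrak{T}_\lambda$ with $\End_{\k\Delta}(E^{(\lambda)})\cong\k$; this already establishes the first assertion of the corollary.

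Next I would verify the hypotheses of Theorem~\ref{thm1} for $V=E^{(\lambda)}$. The tube $\mathfrak{T}_\lambda$ is stable, since a connected hereditary algebra of infinite representation type has no indecomposable projective or injective module in a regular component, and it is homogeneous by construction, so it has the shape of Figure~\ref{fig1}. To see that $\SEnd_{\k\Delta}(E^{(\lambda)})$ is a division ring, note that every indecomposable projective $\k\Delta$-module is postprojective and that $\Hom_{\k\Delta}(X,P)=0$ whenever $X$ is regular and $P$ is postprojective; hence no nonzero endomorphism of $E^{(\lambda)}$ factors through a projective module, so $\SEnd_{\k\Delta}(E^{(\lambda)})$ is a nonzero quotient of $\End_{\k\Delta}(E^{(\lambda)})\cong\k$, and therefore $\SEnd_{\k\Delta}(E^{(\lambda)})\cong\k$. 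Finally, because $\k$ is algebraically closed, $\End_{\k\Delta}(E^{(\lambda)})\cong\k$, and $\Ext^2_{\k\Delta}(E^{(\lambda)},E^{(\lambda)})=0$ (the algebra being hereditary), \cite[\S 2.2 Theorem (a)]{simson} provides, for each $\ell\ge 2$, a short exact sequence $0\to E^{(\lambda)}[\ell-1]\to E^{(\lambda)}[\ell]\to E^{(\lambda)}[1]\to 0$ as in (\ref{seslong}). Thus all the hypotheses of Theorem~\ref{thm1} are met.

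Applying Theorem~\ref{thm1} then yields $R(\k\Delta,E^{(\lambda)})\cong\k[\![t]\!]$, and since $\End_{\k\Delta}(E^{(\lambda)})\cong\k$, \cite[Prop. 2.1]{blehervelez} shows that this versal deformation ring is in fact universal; by \cite[Prop. 2.5]{blehervelez} the same conclusion holds for any $\k$-algebra Morita equivalent to $\k\Delta$, which completes the argument. I expect the only delicate point to be the bookkeeping in the first step --- pinning down, for each $\lambda\in\k^\ast$, a homogeneous tube of $\k\Delta$ together with a simple regular module in its mouth and identifying its endomorphism ring --- while the existence of the sequences (\ref{seslong}) and the passage to universality are essentially formal given Theorem~\ref{thm1} and the cited results.
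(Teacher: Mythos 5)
Your argument is correct and follows essentially the same route as the paper: identify the homogeneous tubes $\mathfrak{T}_\lambda$ and the simple regular modules $E^{(\lambda)}$ via \cite[\S XIII.2.24]{simson} and \cite[\S XI.2.8 Theorem]{simson}, obtain the short exact sequences (\ref{seslong}) from \cite[\S 2.2 Theorem (a)]{simson} using $\Ext^2=0$ for the hereditary algebra $\k\Delta$, and then apply Theorem \ref{thm1} together with \cite[Prop. 2.1, Prop. 2.5]{blehervelez} for universality. Your explicit verification that $\SEnd_{\k\Delta}(E^{(\lambda)})\cong\k$ (via $\Hom(\mathrm{regular},\mathrm{postprojective})=0$) is a welcome detail that the paper leaves implicit, but it does not change the substance of the argument.
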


On the other hand, let us still assume that $\k$ is algebraically closed and that $\A$ is a symmetric special biserial $\k$-algebra. If $V$ is a band module whose stable endomorphism ring is isomorphic to $\k$, then $V \cong \tau_\A V\cong \Omega^2 V$, where $\Omega V$ is the kernel of a projective cover $P\to V$, which is unique up to isomorphism. This implies that $\Ext_\A^2(V,V) \cong \SEnd_\A(V) \cong \k$. This implies that the results from \cite[\S 2.2 Theorem]{simson} cannot be applied. However, using e.g. the aformentioned description of band modules from \cite[\S 2.3]{schroll}, if $V=V[1]$ is a band module that lies in the mouth of its stable homogeneous tube of the stable Auslander-Reiten quiver of $\A$, then we still have a short exact sequence of the form (\ref{seslong}) (see \S \ref{subsec2}). Thus we obtain the following result.  

\begin{corollary}\label{cor1}
Assume that $\k$ is algebraically closed and $\A$ is a symmetric special biserial algebra. If $V$ is a band right $\A$-module that lies in the mouth of its stable homogenous tube of the stable Auslander-Reiten quiver of $\A$ and $\SEnd_\A(V) \cong \k$, then the versal deformation ring $R(\A,V)$ is universal and isomorphic to $\k[\![t]\!]$.  
\end{corollary}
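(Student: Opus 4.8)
The strategy is to verify the hypotheses of Theorem~\ref{thm1} for $V$, apply it to obtain $R(\A,V)\cong\k[\![t]\!]$, and then deduce universality from the self-injectivity of $\A$ together with the facts on Gorenstein-projective modules recalled in Section~\ref{sec1}.

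Since $\A$ is symmetric special biserial it is self-injective, so every finite dimensional right $\A$-module is Gorenstein-projective. Being a band module, $V$ is non-projective and indecomposable (see \cite{buri}), and the hypothesis $\SEnd_\A(V)\cong\k$ forces $\SEnd_\A(V)$ to be a division ring. By \cite[\S II.4.3]{erdmann} (see also \cite[\S 3.1]{ringel2} and \cite[\S X.1]{simson}) $V$ lies in a stable homogeneous tube $\mathfrak{T}$ of the Auslander-Reiten quiver of $\A$, and by hypothesis $V$ lies in the mouth of $\mathfrak{T}$, so $V=V[1]$ in the notation of Figure~\ref{fig1}; for $\ell\ge 2$ let $V[\ell]$ denote the indecomposable module of quasi-length $\ell$ in $\mathfrak{T}$, unique up to isomorphism since $\mathfrak{T}$ is homogeneous.

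It remains to produce, for every $\ell\ge 2$, a short exact sequence as in (\ref{seslong}); this is the one point that needs an explicit computation, and it is carried out in \S\ref{subsec2}. Using the description of band modules from \cite[\S 2.3]{schroll} and \cite[\S 2.2]{schroll2}, one has $V\cong M(\beta,1,\lambda)$ for a suitable band $\beta$ and some $\lambda\in\k^\ast$, while $V[\ell]\cong M(\beta,\ell,\lambda)$ is obtained from $V$ by replacing the one-dimensional multiplicity datum $\lambda$ with a Jordan block $J_\ell(\lambda)$. The $J_\ell(\lambda)$-invariant subspace $\k^{\ell-1}\subseteq\k^{\ell}$ then cuts out a submodule of $V[\ell]$ isomorphic to $V[\ell-1]$ with quotient isomorphic to $V[1]$, which is exactly the sequence (\ref{seslong}); this is the canonical short exact sequence exhibiting $V[\ell-1]$ as the submodule of quasi-length $\ell-1$ of $V[\ell]$, whose quotient is the mouth module $V[1]$ precisely because $\mathfrak{T}$ is homogeneous, i.e. $\tau_\A V\cong V$. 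As remarked just before the statement, this sequence must be obtained by hand, since $\Ext^2_\A(V,V)\cong\SEnd_\A(V)\cong\k\ne 0$ rules out an appeal to \cite[\S 2.2 Theorem]{simson}.

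Granting (\ref{seslong}), Theorem~\ref{thm1} gives $R(\A,V)\cong\k[\![t]\!]$. Finally, since $V$ is a non-projective indecomposable Gorenstein-projective right $\A$-module with $\SEnd_\A(V)\cong\k$, \cite[Thm. 1.2 (ii)]{bekkert-giraldo-velez} together with \cite[Thm. 1.2 (i)]{velez4} shows that $R(\A,V)$ is universal, and combining the two conclusions yields that $R(\A,V)$ is universal and isomorphic to $\k[\![t]\!]$. The only real obstacle is the verification of (\ref{seslong}) for the modules $V[\ell]$, which reduces to writing these band modules down explicitly and reading off the filtration induced by the Jordan block on the multiplicity space.
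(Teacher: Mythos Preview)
Your proposal is correct and follows essentially the same route as the paper: verify the short exact sequence (\ref{seslong}) for band modules via the Jordan-block filtration (this is exactly the content of Lemma~\ref{lem1}), apply Theorem~\ref{thm1} to get $R(\A,V)\cong\k[\![t]\!]$, and obtain universality from self-injectivity together with \cite[Thm.~1.2~(ii)]{bekkert-giraldo-velez} and \cite[Thm.~1.2~(i)]{velez4}.
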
 

Observe that Corollary \ref{cor1} generalizes the aforementioned results in \cite{blehervelez,bleher15} and \cite{meyer} where the arguments use elementary matrices that are defined according to the form of the band and the scalar $\lambda \in \k^\ast$ that define the corresponding band module to prove these results. This argument is clearly avoided by using Corollary \ref{cor1}. Note as well that the path algebra $\k\Delta(\widetilde{\mathbb{E}}_\ell)$ with $\ell = 6, 7, 8$ is not special biserial and thus the techniques from \cite{blehervelez,bleher15} and \cite{meyer} cannot be immediately used in this situation. Next assume that $\A$ is a $\k$-algebra with finite global dimension. Then every finite dimensional Gorenstein-projective right $\A$-module $V$ is projective and thus it has a universal deformation ring $R(\A,V)$ isomorphic to $\k$ (see Remark \ref{rem0} (ii)). It follows from Corollary \ref{cor2} that the simple regular modules $E^{(\lambda)}$ provide examples of non-Gorenstein-proyective modules over algebras of finite global dimension that have a non-trivial deformation ring. 

\begin{figure}
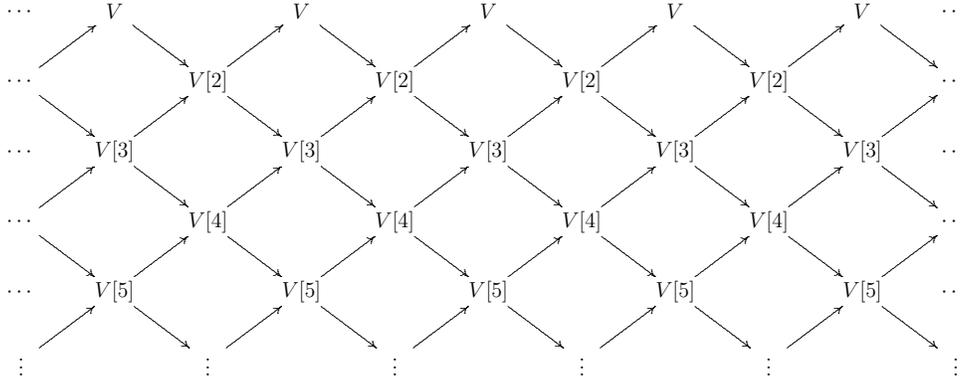

\scalebox{0.68}{
$
\begindc{\commdiag}[130]
% Second row
\obj(-20,7)[2]{$\cdots$}
\obj(-16,7)[3]{$V$}
\obj(-8,7)[4]{$V$}
\obj(0,7)[5]{$V$}
\obj(8,7)[6]{$V$}
\obj(16,7)[7a]{$V$}
\obj(20,7)[7b]{$\cdots$}
%End Second Row

%\mor{7c}{3}{}
%Third Row
\obj(-20,4)[7c]{$\cdots$}
\obj(-12,4)[8]{$V[2]$}
\obj(-4,4)[9]{$V[2]$}
\obj(4,4)[10]{$V[2]$}
\obj(12,4)[11]{$V[2]$}
\obj(20,4)[11a]{$\cdots$}
%End Third Row
\mor{11}{7a}{}
\mor{7a}{11a}{}
\mor{3}{8}{}
\mor{7c}{3}{}
% Fourth Row
\obj(-20,1)[12a]{$\cdots$}
\obj(-16,1)[12]{$V[3]$}
\obj(-8,1)[13]{$V[3]$}
\obj(0,1)[14]{$V[3]$}
\obj(8,1)[15]{$V[3]$}
\obj(16,1)[16]{$V[3]$}
\obj(20,1)[16a]{$\cdots$}

%End Fourth Row
\mor{7c}{12}{}
\mor{12}{8}{}
\mor{11}{16}{}
\mor{16}{11a}{}
%Fifth Row
\obj(-20,-2)[17a]{$\cdots$}
\obj(-12,-2)[17]{$V[4]$}
\obj(-4,-2)[18]{$V[4]$}
\obj(4,-2)[19]{$V[4]$}
\obj(12,-2)[20]{$V[4]$}
\obj(20,-2)[20a]{$\cdots$}
\mor{17a}{12}{}
\mor{12}{17}{}
\mor{20}{16}{}
\mor{16}{20a}{}

%End Fifth Row

%Sixth Row
\obj(-20,-5)[21a]{$\cdots$}
\obj(-16,-5)[21]{$V[5]$}
\obj(-8,-5)[22]{$V[5]$}
\obj(0,-5)[23]{$V[5]$}
\obj(8,-5)[24]{$V[5]$}
\obj(16,-5)[25]{$V[5]$}
\obj(20,-5)[25a]{$\cdots$}
%End Sixth Row
\mor{17a}{21}{}
\mor{21}{17}{}
\mor{20}{25}{}
\mor{25}{20a}{}
%Seventh Row
\obj(-20,-8)[26a]{$\vdots$}
\obj(-12,-8)[26]{$\vdots$}
\obj(-4,-8)[27]{$\vdots$}
\obj(4,-8)[28]{$\vdots$}
\obj(12,-8)[29]{$\vdots$}
\obj(20,-8)[29a]{$\vdots$}
\mor{26a}{21}{}
\mor{21}{26}{}
\mor{29}{25}{}
\mor{25}{29a}{}

%End Seventh Row

%Morphisms

\mor{8}{4}{}
\mor{4}{9}{}
\mor{9}{5}{}
\mor{5}{10}{}
\mor{10}{6}{}
\mor{6}{11}{}
\mor{8}{13}{}
\mor{13}{9}{}
\mor{9}{14}{}
\mor{14}{10}{}
\mor{10}{15}{}
\mor{15}{11}{}
\mor{17}{13}{}
\mor{13}{18}{}
\mor{18}{14}{}
\mor{14}{19}{}
\mor{19}{15}{}
\mor{15}{20}{}
\mor{17}{22}{}
\mor{22}{18}{}
\mor{18}{23}{}
\mor{23}{19}{}
\mor{19}{24}{}
\mor{24}{20}{}
\mor{26}{22}{}
\mor{22}{27}{}
\mor{27}{23}{}
\mor{23}{28}{}
\mor{28}{24}{}
\mor{24}{29}{}
\enddc
$
}
\caption{The stable homogeneous tube $\mathfrak{T}$ containing the right $\A$-module $V$.}\label{fig1}
\end{figure}

\begin{figure}
\begin{align*}
&\xymatrix@=10pt{&&\ar[dl]\underset{1}{\bullet}&\ar[l]\underset{2}{\bullet}&\ar[l]\cdots&\ar[l]\underset{p-1}{\bullet}\\\Delta(\widetilde{\mathbb{A}}_{p,q}):&\underset{0}{\bullet}&&&&&\underset{p+q-1}{\bullet}\ar[dl]\ar[ul]\\&&\ar[ul]\underset{p}{\bullet}&\ar[l]\underset{p+1}{\bullet}&\ar[l]\cdots&\ar[l]\underset{p+q-2}{\bullet}}\\
&\xymatrix@=10pt{&\underset{1}{\bullet}&&&&&\ar[dl]\underset{m}{\bullet}\\\Delta(\widetilde{\mathbb{D}}_m):&&\ar[ul]\ar[dl]\underset{3}{\bullet}&\ar[l]\underset{4}{\bullet}&\ar[l]\cdots&\ar[l]\underset{m-1}{\bullet}&\\&\underset{2}{\bullet}&&&&&\ar[ul]\underset{m+1}{\bullet}}\\
&\xymatrix@=10pt{&&&\underset{5}{\bullet}\ar[d]&&&\\\Delta(\widetilde{\mathbb{E}}_6):&&&\underset{4}{\bullet}\ar[d]&&&\\&\underset{3}{\bullet}\ar[r]&\underset{2}{\bullet}\ar[r]&\underset{1}{\bullet}&\underset{6}{\bullet}\ar[l]&\underset{7}{\bullet}\ar[l]&}\\
&\xymatrix@=10pt{\Delta(\widetilde{\mathbb{E}}_7):&&&&\underset{5}{\bullet}\ar[d]&&&\\&\underset{4}{\bullet}\ar[r]&\underset{3}{\bullet}\ar[r]&\underset{2}{\bullet}\ar[r]&\underset{1}{\bullet}&\underset{6}{\bullet}\ar[l]&\underset{7}{\bullet}\ar[l]&\underset{8}{\bullet}\ar[l]}\\
&\xymatrix@=10pt{\Delta(\widetilde{\mathbb{E}}_8):&&&&\underset{5}{\bullet}\ar[d]&&&\\&\underset{4}{\bullet}\ar[r]&\underset{3}{\bullet}\ar[r]&\underset{2}{\bullet}\ar[r]&\underset{1}{\bullet}&\underset{6}{\bullet}\ar[l]&\underset{7}{\bullet}\ar[l]&\underset{8}{\bullet}\ar[l]&\underset{9}{\bullet}\ar[l]}
\end{align*}
\caption{The canonical oriented Euclidean quivers.}\label{fig2}
\end{figure}
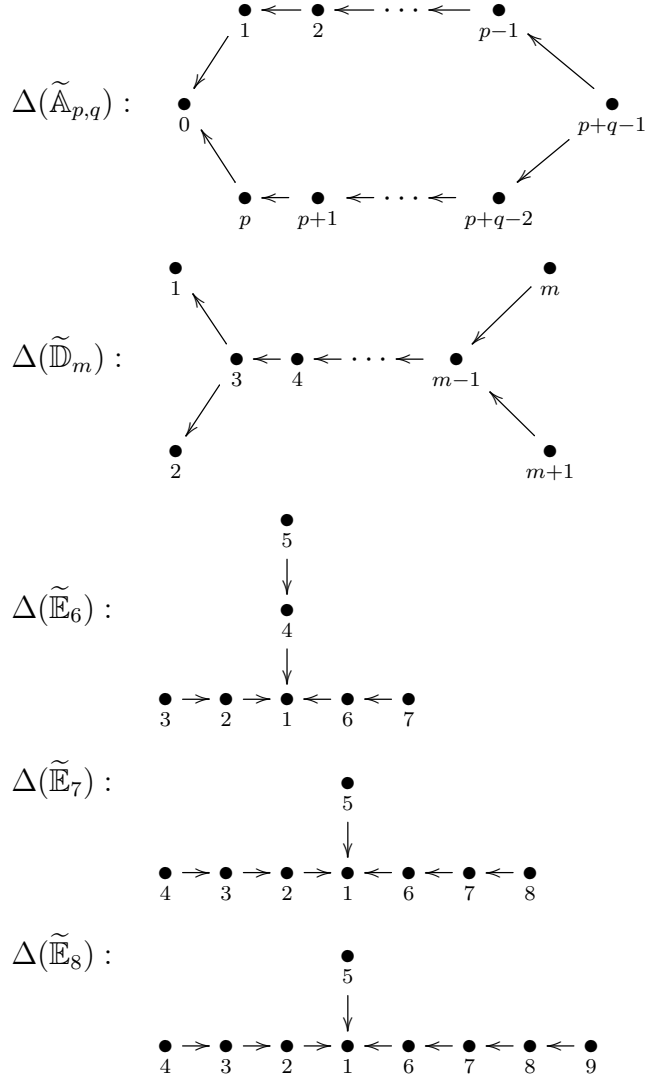

Corollary \ref{cor1} has an immediate implication for a particular class of algebras, specifically those defined by their representation type.

\begin{corollary}\label{cor2}
Let $\k$ be an algebraically closed field. If $\A$ is a symmetric special biserial algebra that is $\tau$-tilting infinite, then there exists at least a band $b$ such that for all $\lambda\in\k^{\ast}$ the band right $\Lambda$-module $V(b,\lambda, 1)$ has a universal deformation ring $R(\A,V)$ isomorphic to $\k[\![t]\!]$.
\end{corollary}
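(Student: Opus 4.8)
The plan is to derive Corollary \ref{cor2} as a direct consequence of Corollary \ref{cor1} together with a characterization of $\tau$-tilting infiniteness for symmetric special biserial algebras in terms of the presence of bands. First I would recall that, by the work of Burban--Drozd and others on special biserial algebras, the indecomposable non-projective modules over a symmetric special biserial algebra $\A$ are precisely the string modules and the band modules $V(b,\lambda,n)$ parametrized by a band $b$, a scalar $\lambda \in \k^\ast$, and a positive integer $n$ (the Jordan block size). The band modules of the form $V(b,\lambda,1)$ are exactly the ones lying in the mouth of a stable homogeneous tube, and a standard computation (as indicated in \S\ref{subsec2} of the paper) shows $\SEnd_\A(V(b,\lambda,1)) \cong \k$ for these modules. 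Thus Corollary \ref{cor1} applies verbatim to any such $V(b,\lambda,1)$, yielding that $R(\A, V(b,\lambda,1))$ is universal and isomorphic to $\k[\![t]\!]$.

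The only remaining point is to confirm that a symmetric special biserial algebra which is $\tau$-tilting infinite actually possesses at least one band. This is where I would invoke the known classification: for a symmetric special biserial algebra $\A$ (equivalently, a Brauer graph algebra, by the results surveyed in \cite{schroll}), being $\tau$-tilting finite is equivalent to being representation-finite, which in turn is equivalent to the Brauer graph algebra having at most one cycle of a prescribed multiplicity condition — and in particular to the algebra admitting no bands. Concretely, a special biserial algebra is representation-finite if and only if it has no bands (this is the classical criterion for string algebras: finite representation type iff there are finitely many strings iff there are no bands), and for symmetric special biserial algebras $\tau$-tilting finiteness coincides with representation-finiteness by the work of Adachi--Aihara--Chan and subsequent refinements on Brauer graph algebras. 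Hence, if $\A$ is $\tau$-tilting infinite, it is representation-infinite, so it has at least one band $b$.

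Putting these together: choose any band $b$ of $\A$; for each $\lambda \in \k^\ast$ the module $V(b,\lambda,1)$ lies in the mouth of its stable homogeneous tube and has $\SEnd_\A(V(b,\lambda,1))\cong\k$, so Corollary \ref{cor1} gives $R(\A,V(b,\lambda,1))\cong\k[\![t]\!]$ and that this ring is universal. This completes the argument. The main obstacle is not in the deformation-theoretic input — that is entirely handled by Corollary \ref{cor1} — but rather in correctly citing and assembling the equivalence ``$\tau$-tilting infinite $\Leftrightarrow$ representation-infinite $\Leftrightarrow$ admits a band'' for symmetric special biserial algebras; one must be careful that this chain of equivalences genuinely holds in the symmetric case (it does, via the Brauer graph algebra perspective), since for general finite-dimensional algebras $\tau$-tilting finiteness is strictly weaker than representation-finiteness. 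Once that bibliographic point is nailed down, the proof is immediate.
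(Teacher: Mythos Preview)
Your argument has a genuine gap. The chain ``$\tau$-tilting infinite $\Rightarrow$ representation-infinite $\Rightarrow$ admits a band'' is fine, but the next step --- that $\SEnd_\A(V(b,\lambda,1))\cong\k$ for \emph{any} band $b$ --- is false, and \S\ref{subsec2} neither claims nor proves it (that subsection only produces the short exact sequences (\ref{seslong})). The hypothesis $\SEnd_\A(V)\cong\k$ in Corollary \ref{cor1} is a genuine restriction on which bands are admissible. For a concrete obstruction, take a Brauer graph algebra whose graph is a single odd cycle with trivial multiplicities: it is representation-infinite and has bands, yet it is $\tau$-tilting finite, so by \cite[Thm.~5.1]{schtreval} none of its band modules $V(b,\lambda,1)$ is a brick. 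This same example shows that the equivalence ``$\tau$-tilting finite $\Leftrightarrow$ representation-finite'' you attribute to Adachi--Aihara--Chan does not hold for symmetric special biserial algebras.

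The paper's proof sidesteps both issues by citing \cite[Thm.~5.1]{schtreval} directly: $\tau$-tilting infiniteness for symmetric special biserial algebras is \emph{equivalent} to the existence of a band $b$ for which $V(b,\lambda,1)$ is a brick. That single citation supplies simultaneously the band and the condition $\End_\A(V(b,\lambda,1))\cong\k$ (hence $\SEnd_\A(V(b,\lambda,1))\cong\k$) needed to invoke Corollary \ref{cor1}; no detour through representation type is required, and indeed any such detour loses exactly the information you need.
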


A key characterization established in \cite[Thm. 5.1]{schtreval} states that a symmetric special biserial algebra is $\tau$-tilting infinite precisely when there exists a least a band that induces a band module that is a brick. This fundamental result, in turn, implies the special property observed in Corollary \ref{cor2}.

\section{Proof of the main result}\label{sec2}
Throughout this section we keep the notation introduced in \S \ref{sec1}. 
Let $V$ be a right $\A$-module and let $R$ be a fixed but arbitrary object in $\widehat{\Ca}$. Following \cite{blehervelez}, a {\it lift} $(M,\phi)$ 
of $V$ over $R$ is a finitely generated right $R\A$-module $M$ 
that is free over $R$ 
together with an isomorphism of $\A$-modules $\phi:\k\otimes_RM\to V$, where $R\A:=R\otimes_{\k}\A$. Two lifts $(M,\phi)$ and $(M',\phi')$ over $R$ are {\it isomorphic} 
if there exists an $R\A$-module 
isomorphism $f:M\to M'$ such that $\phi'\circ (\mathrm{id}_\k\otimes_R f)=\phi$.
If $(M,\phi)$ is a lift of $V$ over $R$, we  denote by $[M,\phi]$ its isomorphism class and say that $[M,\phi]$ is a {\it deformation} of $V$ 
over $R$. We denote by $\Def_\A(V,R)$ the 
set of all deformations of $V$ over $R$. The {\it deformation functor} corresponding to $V$ is the 
covariant functor $\widehat{\Fun}_V:\widehat{\Ca}\to \Sets$ defined as follows: for all objects $R$ in $\widehat{\Ca}$, define $\widehat{\Fun}_V(R)=\Def_
\A(V,R)$, and for all morphisms $\theta:R\to 
R'$ in $\widehat{\Ca}$, 
let $\widehat{\Fun}_V(\theta):\Def_\A(V,R)\to \Def_\A(V,R')$ be defined as $\widehat{\Fun}_V(\theta)([M,\phi])=[R'\otimes_{R,\theta}M,\phi_\theta]$, 
where $\phi_\theta: \k\otimes_{R'}
(R'\otimes_{R,\theta}M)\to V$ is the composition of $\A$-module isomorphisms 
\[\k\otimes_{R'}(R'\otimes_{R,\theta}M)\cong \k\otimes_RM\xrightarrow{\phi} V.\] 

%\begin{remark}\label{rem1}
%Some authors consider a weaker notion of deformations. Namely, let $\A$ and $V$ be as above, let $R$ be an object in $\widehat{\Ca}$ and let $(M,\phi)$ be a lift of $V$ over $R$. Then the isomorphism class $[M]$ of $M$ as an $R\A$-module is called a {\it weak deformation} of $V$ over $R$ (see e.g. \cite[\S 5.2]{keller} and \cite[Remark 2.4]{blehervelez}). We can also define the weak deformation functor $\widehat{\Fun}_V^w: \widehat{\Ca}\to\Sets$ which sends an object $R$ in $\widehat{\Ca}$ to the set of
%weak deformations of $V$ over $R$  and a morphism  $\alpha:R \to R'$  in $\widehat{\Ca}$ to the map  $\widehat{\Fun}_V^w :\widehat{\Fun}_V^w (R)\to \widehat{\Fun}_V^w(R')$, which is defined by $\widehat{\Fun}_V^w(\alpha)([M]) = [R'\otimes_{R,\alpha}M]$.
%In general, a weak deformation of $V$ over $R$ identifies more lifts than a deformation of $V$ over $R$ that respects the isomorphism $\phi$ of a representative $(M,\phi)$. 
%\end{remark}

Suppose there exists an object $R(\A,V)$ in $\widehat{\Ca}$ and a deformation $[U(\A,V), \phi_{U(\A,V)}]$ of $V$ over $R(\A,V)$ with the 
following property. For all objects $R$ in $\widehat{\Ca}$ and for all deformations $[M,\phi]$ of $V$ over $R$, there exists a morphism $\psi_{R(\A,V),R,[M,\phi]}:R(\A,V)\to R$ 
in $\widehat{\Ca}$ such that 
\[\widehat{\Fun}_V(\psi_{R(\A,V),R,[M,\phi]})[U(\A,V), \phi_{U(\A,V)}]=[M,\phi],\]
and moreover, $\psi_{R(\A,V),R,[M,\phi]}$ is unique if $R$ is the ring of dual numbers $\k[\epsilon]$ with $\epsilon^2=0$.  Then $R(\A,V)$ and $
[U(\A,V),\phi_{U(\A,V)}]$ are called the {\it versal deformation ring} and {\it versal deformation} of $V$, respectively. If the morphism $
\psi_{R(\A,V),R,[M,\phi]}$ is unique for all $R\in\Ob(\widehat{\Ca})$ and deformations $[M,\phi]$ of $V$ over $R$, then $R(\A,V)$ and $[U(\A,V),\phi_{U(\A,V)}]$ are 
called the {\it universal deformation ring} and the {\it universal deformation} of $V$, respectively.  In other words, the universal deformation 
ring $R(\A,V)$ represents the deformation functor $\widehat{\Fun}_V$ in the sense that $\widehat{\Fun}_V$ is naturally isomorphic to the $\Hom$ 
functor $\Hom_{\widehat{\Ca}}(R(\A,V),-)$. We denote by $\Fun_V$  the restriction of $\widehat{\Fun}_V$ to the full subcategory of Artinian objects in $\widehat{\Ca}$. Following \cite[\S 2.6]{sch}, we call the set $\Fun_V(\k[\epsilon])$ the {\it tangent space} of $\Fun_V$, which has a structure of a $\k$-vector space by \cite[Lemma 2.10]{sch}. 
It was proved in \cite[Prop. 2.1]{blehervelez} that $\Fun_V$ satisfies the Schlessinger's criteria \cite[Thm. 2.11]{sch}, that there exists an isomorphism of $\k$-vector spaces 
\begin{equation}\label{hoch}
\Fun_V(\k[\epsilon])\to \Ext_\A^1(V,V),
\end{equation}
and that $\widehat{\Fun}_V$ is continuous in the sense of \cite[\S 14]{mazur}, i.e. for all objects $R$ in $\widehat{\Ca}$, we have  
\begin{equation}\label{cont}
\widehat{\Fun}_V(R)=\invlim_n \Fun_V(R/\m_R^n), 
\end{equation}
where $\m_R$ denotes the unique maximal ideal of $R$. Consequently, $V$ has always a well-defined versal deformation ring $R(\A,V)$ which is also universal provided that $\End_\A(V)$ is isomorphic to $\k$. It was also proved in \cite[Prop. 2.5]{blehervelez} that versal deformation rings are invariant under Morita equivalences between finite dimensional $\k$-algebras. As mentioned in \S \ref{sec1}, if $V$ is a non-projective indecomposable Gorenstein-projective right $\A$-module whose stable endomorphism ring $\SEnd_\A(V)$ is isomorphic to $\k$, then it follows from \cite[Thm. 1.2 (ii) ]{bekkert-giraldo-velez} and \cite[Thm. 1.2 (i)]{velez4} that $R(\A,V)$ is also universal and stable under syzygies. Recall that $\A$ is a Gorenstein $\k$-algebra provided that $\A$ has finite injective dimension as a $\A$-module at both sides. If $\A$ is a self-injective $\k$-algebra then $\A$ is also  Gorenstein and every right $\A$-module is Gorenstein-projective. Therefore the results in \cite[Thm. 1.2 (ii) ]{bekkert-giraldo-velez} and \cite[Thm. 1.2 (i)]{velez4} are applicable in this situation.  

\begin{remark}\label{rem0}
\begin{enumerate}
\item Assume that $\dim_\k \Ext_\A^1(V,V)=r$, then it follows from the isomorphism of $\k$-vector spaces (\ref{hoch}) that the versal deformation ring $R(\A,V)$ is isomorphic to a quotient algebra of the power series ring  $\k[\![t_1,\ldots,t_r]\!]$ and $r$ is minimal with respect to this property. In particular, if $V$ is a right $\A$-module such that $\Ext_\A^1(V,V)=0$, then $R(\A,V)$ is universal and isomorphic to $\k$ (see \cite[Remark 2.1]{bleher15} for more details).
\item In order to prove the main result in this note, we need the following definition and property of morphisms between objects in $\widehat{\Ca}$. Following \cite{sch}, if $R$ is an object in $\widehat{\Ca}$, we denote by $t^\ast_R$ the quotient $\m_R/\m_R^2$ and call it the  {\it Zariski cotangent space} of $R$ over $\k$. Let $\theta: R\to R'$ be a morphism in $\widehat{\Ca}$.  It follows by \cite[Lemma 1.1]{sch} that $\theta$ is surjective if and only if the induced map of cotangent spaces $\theta^\ast: t^\ast_R\to t^\ast_{R'}$ is surjective. 
\end{enumerate}
\end{remark}

\subsection{Proof of Theorem \ref{thm1}}
In the following we assume the hypotheses of Theorem \ref{thm1}. Let $\mathfrak{T}$ be the stable homogeneous tube that as in Figure \ref{fig1}. Let $V = V[1]$ and $0 = V[0]$. For all $\ell \geq 1$, let $\iota_\ell: V[\ell]\to V[\ell+1]$ be the irreducible morphism that is injective and $\pi_\ell: V[\ell + 1] \to V[\ell]$ be the irreducible morphism that is surjective. In particular, for all $\ell \geq 1$, we obtain an almost split sequence
\begin{equation}
0\to V[\ell]\xrightarrow{\begin{pmatrix} \iota_\ell\\ \pi_{\ell-1}\end{pmatrix}}V[\ell+1]\oplus V[\ell-1] \xrightarrow{\begin{pmatrix} \pi_\ell &-\iota_{\ell-1}\end{pmatrix}}V[\ell]\to 0.
\end{equation}  

\begin{claim}
The versal deformation ring $R(\A,V)$ is a quotient of $\k[\![t]\!]$. 
\end{claim}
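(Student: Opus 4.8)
The plan is to show that $\dim_\k \Ext_\A^1(V,V) \leq 1$, from which the claim follows directly via Remark \ref{rem0} (i): since $R(\A,V)$ is always isomorphic to a quotient of $\k[\![t_1,\ldots,t_r]\!]$ with $r = \dim_\k\Ext_\A^1(V,V)$ minimal, having $r \leq 1$ forces $R(\A,V)$ to be a quotient of $\k[\![t]\!]$. (The case $r = 0$, i.e. $\Ext_\A^1(V,V) = 0$, yields $R(\A,V) \cong \k$, which is itself a quotient of $\k[\![t]\!]$, so it is covered.)

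To bound $\dim_\k \Ext_\A^1(V,V)$, I would exploit the almost split sequence
\begin{equation*}
0\to V[1]\xrightarrow{\iota_1} V[2]\xrightarrow{\pi_1} V[1]\to 0,
\end{equation*}
which is the $\ell = 1$ case of the displayed almost split sequence (with $V[0] = 0$). This is a non-split short exact sequence, so it determines a nonzero class in $\Ext_\A^1(V,V)$. The key point is to argue that this class \emph{spans} $\Ext_\A^1(V,V)$ over the division ring $D := \SEnd_\A(V)$, so that $\Ext_\A^1(V,V)$ is one-dimensional as a left (or right) $D$-vector space; combined with $D$ being a finite-dimensional division $\k$-algebra we would then need $D \cong \k$, which is where the hypothesis that $V$ lies in the mouth with $\SEnd_\A(V)$ a division ring, together with the existence of the sequences \eqref{seslong}, should come in — more carefully, one uses that \emph{any} non-split extension of $V$ by $V$ must have middle term isomorphic to $V[2]$. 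This follows because a non-split self-extension of the mouth module $V$ in a stable homogeneous tube has to be $V[2]$: indecomposable modules in $\mathfrak{T}$ with composition factors only among copies of $V$ and of the right length are exhausted by the $V[\ell]$, and the middle term of any self-extension of $V$ has length $2\dim_\k V$ (as a $\k$-space) hence is either $V \oplus V$ (split case) or $V[2]$. Standard Auslander-Reiten theory then gives that every non-split class in $\Ext_\A^1(V,V)$ is obtained from the almost split sequence by pushout/pullback along endomorphisms of $V$, and since $\End_\A(V)$ acts on $V$ through $D$ modulo the radical, the class of the almost split sequence generates $\Ext^1_\A(V,V)$ as a $D$-module.

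Concretely, the cleanest route is: apply $\Hom_\A(V,-)$ to the almost split sequence to get the long exact sequence
\begin{equation*}
0\to\Hom_\A(V,V)\xrightarrow{(\iota_1)_*}\Hom_\A(V,V[2])\xrightarrow{(\pi_1)_*}\Hom_\A(V,V)\xrightarrow{\delta}\Ext_\A^1(V,V)\to\Ext_\A^1(V,V[2])\to\cdots
\end{equation*}
The connecting map $\delta$ sends $\mathrm{id}_V$ to the class of the almost split sequence, and the image of $\delta$ is a cyclic $\End_\A(V)$-submodule generated by that class. The defining property of an almost split sequence is precisely that $(\pi_1)_*$ has image equal to $\rad_\A(V,V)$ (the non-isomorphisms $V \to V$), since a map $V\to V$ factors through $\pi_1$ iff it is not a split epimorphism iff it lies in the radical. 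Hence $\mathrm{coker}(\pi_1)_* \cong \End_\A(V)/\rad_\A(V,V) = \SEnd_\A(V) = D$, which is one-dimensional over itself. So $\mathrm{image}(\delta) \cong D$ as a $D$-space. The remaining step — and the part I expect to be the main obstacle — is to show $\delta$ is \emph{surjective}, equivalently that the map $\Ext_\A^1(V,V)\to\Ext_\A^1(V,V[2])$ induced by $\iota_1$ is zero. This is where the short exact sequences \eqref{seslong} and an inductive argument up the tube enter: using $0\to V[\ell-1]\to V[\ell]\to V[1]\to 0$ one propagates vanishing of the relevant $\mathrm{Ext}^1$ maps, with the homogeneity of the tube (all $\tau_\A V[\ell] \cong V[\ell]$) ensuring the pattern stabilizes; one shows any self-extension of $V$ pushed into $V[2]$ becomes split because $V[2]$ is "thicker" at the mouth. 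Assembling these, $\delta$ is an isomorphism onto $\Ext_\A^1(V,V)$, so $\dim_D\Ext_\A^1(V,V) = 1$; since $D$ is finite-dimensional over $\k$ and $\Ext^1$ is finite-dimensional over $\k$ this bounds $\dim_\k\Ext^1_\A(V,V) = \dim_\k D$, and a further argument (or the hypotheses of the corollaries, where $D = \k$) pins down the quotient structure, giving the claim.
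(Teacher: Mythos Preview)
Your approach has a genuine gap at exactly the point you flag as ``the main obstacle'': you never actually prove that the connecting map $\partial$ is surjective, i.e., that $(\iota_1)_*:\Ext^1_\A(V,V)\to\Ext^1_\A(V,V[2])$ vanishes. The sentence about ``propagating vanishing up the tube'' and $V[2]$ being ``thicker at the mouth'' is not an argument; it is a hope, and it is not obvious that pushing the almost split sequence out along $\iota_1$ yields a split sequence. Without this step your long exact sequence only gives $\dim_D\Ext^1_\A(V,V)\geq 1$, which you already knew. A second, smaller issue: you identify $\End_\A(V)/\rad\End_\A(V)$ with $\SEnd_\A(V)$ without justification (this is correct under the hypothesis that $\SEnd_\A(V)$ is a division ring, since then the ideal of maps factoring through projectives, being contained in $\rad\End_\A(V)$, must equal it; but it needs a line). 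Finally, even if you completed the argument you would get $\dim_\k\Ext^1_\A(V,V)=\dim_\k D$, not $\leq 1$, so your stated plan does not match your computation when $D\neq\k$.

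The paper's route bypasses the long exact sequence entirely and is much shorter. Since $\SEnd_\A(V)$ is a division ring, \emph{every} non-split short exact sequence $0\to V\to V'\to V\to 0$ is itself an almost split sequence; this is \cite[Cor.~V.2.4]{auslander}. But almost split sequences ending at $V$ are unique up to equivalence by \cite[Thm.~V.1.16]{auslander}. Hence every nonzero class in $\Ext^1_\A(V,V)$ is equivalent to the class of the given almost split sequence, which forces $\dim_\k\Ext^1_\A(V,V)=1$ directly, and Remark~\ref{rem0}(i) finishes. The point is that the division-ring hypothesis on $\SEnd_\A(V)$ is precisely what promotes an arbitrary non-split self-extension to an almost split one, so there is no need to analyse the map into $\Ext^1_\A(V,V[2])$ at all.
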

\begin{proof}
Since we have an almost split sequence $\delta: 0\to V\to V[2]\to V\to 0$, it follows that $\dim_\k\Ext^1_\A(V,V)\geq 1$. Next assume that $\delta': 0\to V\to V'\to V\to 0$ is a non-splitting short exact sequence. Since $\SEnd_\A(V)$ is a division ring by hypothesis, it follows by \cite[Cor. V.2.4]{auslander} that $\delta'$ is also an almost split sequence. Therefore by \cite[Thm. V.1.16]{auslander}, it follows that the short exact sequences $\delta$ and $\delta'$ are equivalent. This proves that $\dim_\k\Ext^1_\A(V,V) =1$ and thus by Remark \ref{rem0}, we obtain that $R(\A,V)$ is a quotient of $\k[\![t]\!]$. 
\end{proof}

\begin{claim}
For each $\ell \geq 2$, the right $\A$-module $V[\ell]$ defines a lift of $V$ over $\k[\![t]\!]/(t^{\ell})$. 
\end{claim}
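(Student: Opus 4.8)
The plan is to turn $V[\ell]$ into a module over $R_\ell:=\k[\![t]\!]/(t^{\ell})$ by letting $t$ act through a suitable nilpotent $\A$-endomorphism, and then to verify the two remaining conditions for a lift of $V$ over $R_\ell$: freeness over $R_\ell$ and the existence of an $\A$-isomorphism $\k\otimes_{R_\ell}V[\ell]\cong V$. Concretely, I would let $t$ act on $V[\ell]$ as $\nu_\ell:=\iota_{\ell-1}\circ\pi_{\ell-1}\in\End_\A(V[\ell])$. Since $\nu_\ell$ is an $\A$-module homomorphism it commutes with the $\A$-action, so as soon as $\nu_\ell^{\ell}=0$ is established this makes $V[\ell]$ into an $R_\ell\A$-module. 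Moreover, because $\pi_{\ell-1}$ is surjective we get $\nu_\ell(V[\ell])=\iota_{\ell-1}(V[\ell-1])$, so $\k\otimes_{R_\ell}V[\ell]=V[\ell]/\nu_\ell(V[\ell])=V[\ell]/\iota_{\ell-1}(V[\ell-1])$, and this quotient is isomorphic to $V=V[1]$ by the short exact sequence (\ref{seslong}) (whose first term can be taken to be $\iota_{\ell-1}(V[\ell-1])$); this isomorphism will serve as the structure map $\phi_\ell$.

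The main structural input is the mesh relation read off from the almost split sequence displayed above: composing its two maps gives $\pi_\ell\circ\iota_\ell=\iota_{\ell-1}\circ\pi_{\ell-1}$ as endomorphisms of $V[\ell]$, equivalently $\pi_{\ell-1}\circ\iota_{\ell-1}=\nu_{\ell-1}$ on $V[\ell-1]$, where I write $\nu_j:=\iota_{j-1}\circ\pi_{j-1}$ for $j\geq 2$ and $\nu_1:=0$ (consistent with $V[0]=0$). From this it follows that $\nu_\ell$ maps the submodule $W_{\ell-1}:=\iota_{\ell-1}(V[\ell-1])$ into itself and that, under the isomorphism $\iota_{\ell-1}\colon V[\ell-1]\to W_{\ell-1}$, the restriction $\nu_\ell|_{W_{\ell-1}}$ corresponds to $\nu_{\ell-1}$. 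I would then induct on $\ell$, the case $\ell=2$ being immediate since $\nu_1=0$: assuming $\nu_{\ell-1}^{\,\ell-1}=0$, one gets $\nu_\ell^{\,\ell-1}(W_{\ell-1})=0$, and since $\nu_\ell(V[\ell])=W_{\ell-1}$ this gives $\nu_\ell^{\,\ell}(V[\ell])=\nu_\ell^{\,\ell-1}(W_{\ell-1})=0$. Hence $\nu_\ell^{\ell}=0$ and $V[\ell]$ is a well-defined $R_\ell\A$-module.

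It remains to see that $V[\ell]$ is free over $R_\ell$. Inductively, the sequence (\ref{seslong}) gives $\dim_\k V[\ell]=\dim_\k V[\ell-1]+\dim_\k V=\ell\cdot\dim_\k V$; put $d:=\dim_\k V$. The $\A$-isomorphism $\k\otimes_{R_\ell}V[\ell]\cong V$ together with Nakayama's lemma (applied with the maximal ideal $(t)$ of $R_\ell$) shows that $V[\ell]$ is generated by $d$ elements as an $R_\ell$-module, so there is a surjection $R_\ell^{\,d}\twoheadrightarrow V[\ell]$; since both sides have $\k$-dimension $\ell d$, this surjection is an isomorphism, so $V[\ell]\cong R_\ell^{\,d}$ is free of rank $d$ over $R_\ell$. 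Together with the structure map $\phi_\ell$, this exhibits $(V[\ell],\phi_\ell)$ as a lift of $V$ over $R_\ell$, as claimed.

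The step I expect to be the main obstacle is the control of the powers of $\nu_\ell$ — proving $\nu_\ell^{\ell}=0$ together with the fact that $\nu_\ell^{k}(V[\ell])\cong V[\ell-k]$ for $0\le k\le\ell$ — which hinges on correctly extracting the mesh relations from the almost split sequences, propagating them through the induction, and on confirming that the submodule $\iota_{\ell-1}(V[\ell-1])=\nu_\ell(V[\ell])$ is precisely the first term occurring in (\ref{seslong}), so that the quotient is genuinely $\cong V$ as an $\A$-module. Once this structural point is in place, the dimension count, the use of Nakayama, and the identification of $\phi_\ell$ are all routine.
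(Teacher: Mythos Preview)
Your argument is correct and follows essentially the same approach as the paper: both let $t$ act via $\nu_\ell=\iota_{\ell-1}\circ\pi_{\ell-1}$, identify $V[\ell]/tV[\ell]\cong V$ through the short exact sequence (\ref{seslong}), and then verify freeness over $R_\ell$. Your version is in fact more explicit on two points the paper glosses over---you justify $\nu_\ell^{\,\ell}=0$ via the mesh relation $\pi_{\ell-1}\iota_{\ell-1}=\nu_{\ell-1}$ and induction, and you obtain freeness by Nakayama plus a $\k$-dimension count rather than by lifting a basis directly---but these are minor variations on the same underlying proof.
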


\begin{proof}
Let $\ell \geq 2$ be fixed but arbitrary. For all $x\in V[\ell]$ let $t$ act on $x$ as $t\cdot x = (\iota_{\ell-1} \circ \pi_{\ell-1})(x)$. Thus, $tV[\ell] \cong V[\ell-1]$, $t^{\ell-1}V[\ell]\cong V[1]$ and $t^\ell V[\ell] = 0$. In this way, $V[\ell]$ becomes a right $\k[\![t]\!]/(t^{\ell})\otimes_\k\A$-module and by using the short exact sequence (\ref{seslong}),we obtain an isomorphism of right $\A$-modules $V[\ell]/tV[\ell]\cong V[1]$. Assume that $d=\dim_\k V$ and let $\{\bar{r}_1,\ldots,\bar{r}_d\}$ be a fixed basis of $V$ over $\k$. Then we can lift the elements $\bar{r}_1,\ldots,\bar{r}_d$ to corresponding elements $r_1,\ldots,r_d\in V[\ell]$ that are linearly independent over $\k$ and such that $\{t^sr_1,\ldots,t^sr_d: 1\leq s\leq \ell-1\}$ is a $\k$-basis of $tV[\ell]\cong V[\ell-1]$, which implies that $\{r_1,\ldots,r_d\}$ is a generating set of $V[\ell]$ as a right $\k[\![t]\!]/(t^{\ell})$-module, i.e. $V[\ell]$ is free over $\k[\![t]\!]/(t^{\ell})$. By using the identification $\k\otimes_{\k[\![t]\!]/(t^{\ell})} V[\ell]\cong V[\ell]/tV[\ell]$, we obtain that there exists an isomorphism of $\A$-modules $\phi_\ell:\k\otimes_{\k[\![t]\!]/(t^{\ell})}V[\ell]\to V[1]$, which implies that $V[\ell]$ induces a lift of $V[\ell]$ over $\k[\![t]\!]/(t^{\ell})$.
\end{proof}

\begin{claim}\label{claim3}
The versal deformation ring $R(\A,V)$ is isomorphic to $\k[\![t]\!]$. 
\end{claim}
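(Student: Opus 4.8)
The plan is to combine the three claims: we already know from Claim~1 that $R(\A,V)$ is a quotient of $\k[\![t]\!]$, and from Claim~2 that each $V[\ell]$ gives a deformation of $V$ over $\k[\![t]\!]/(t^\ell)$. Writing $R = R(\A,V)$ and denoting by $[M_\ell,\phi_\ell]$ the deformation corresponding to $V[\ell]$, the versality of $R$ supplies for each $\ell$ a morphism $\psi_\ell : R \to \k[\![t]\!]/(t^\ell)$ in $\widehat{\Ca}$ that pulls back the versal deformation to $[M_\ell,\phi_\ell]$. The first step is to show each $\psi_\ell$ is surjective. For this I would use Remark~\ref{rem0}(ii): it suffices to check that $\psi_\ell^\ast : t^\ast_R \to t^\ast_{\k[\![t]\!]/(t^\ell)}$ is surjective. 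Since $\dim_\k \Ext^1_\A(V,V) = 1$ (Claim~1), the cotangent space $t^\ast_R$ is one-dimensional (or zero); because $V[2]$ is a nonsplit self-extension, the deformation $[M_2,\phi_2]$ is nontrivial, so under the identification $\Fun_V(\k[\epsilon]) \cong \Ext^1_\A(V,V)$ it corresponds to a nonzero tangent vector, forcing $\psi_2^\ast$ — hence $\psi_2$ — to be surjective and $t^\ast_R$ to be exactly one-dimensional. Then for $\ell \ge 2$ the composite $R \xrightarrow{\psi_\ell} \k[\![t]\!]/(t^\ell) \twoheadrightarrow \k[\![t]\!]/(t^2)$ classifies the deformation $[M_\ell,\phi_\ell]$ reduced mod $t^2$, which is $\k\otimes$-nontrivial (its restriction to $\k[\epsilon]$ is again the nonzero class), so this composite is surjective on cotangent spaces, hence $\psi_\ell^\ast$ is surjective, hence $\psi_\ell$ is surjective.

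The second step is a passage to the limit. Surjectivity of $\psi_\ell : R \to \k[\![t]\!]/(t^\ell)$ means, at the level of ideals, that $R/\ker\psi_\ell \cong \k[\![t]\!]/(t^\ell)$; in particular $\ker\psi_\ell$ is a proper ideal with $R/\ker\psi_\ell$ of $\k$-length exactly $\ell$, so $\ker\psi_\ell \subseteq \m_R^{\ell-1}$ but $\ker\psi_\ell \not\subseteq \m_R^{\ell}$ — more precisely, writing $R$ as a quotient $\k[\![t]\!]/I$ for some ideal $I$ (using Claim~1), the surjection $\k[\![t]\!]/I \twoheadrightarrow \k[\![t]\!]/(t^\ell)$ forces $I \subseteq (t^\ell)$. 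Letting $\ell \to \infty$ gives $I \subseteq \bigcap_{\ell\ge 1}(t^\ell) = 0$ in $\k[\![t]\!]$, so $I = 0$ and $R(\A,V) \cong \k[\![t]\!]$. Here it is important that $\k[\![t]\!]$ is a complete Noetherian local domain so that the Krull intersection theorem applies and the only ideal contained in all powers of the maximal ideal is zero.

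I expect the main obstacle to be the careful justification that each $\psi_\ell$ is surjective, and in particular the clean identification of the reduction mod $t^2$ of the deformation $[M_\ell,\phi_\ell]$ with the nonzero tangent vector. Concretely one needs: (i) $V[\ell]/(t^2 V[\ell])$ together with its induced $\k[\![t]\!]/(t^2)\otimes_\k\A$-structure is a lift of $V$ over $\k[\epsilon]$ (this follows from Claim~2 since reducing the free $\k[\![t]\!]/(t^\ell)$-module $V[\ell]$ modulo $(t^2)$ yields a free $\k[\epsilon]$-module), and (ii) its class in $\Def_\A(V,\k[\epsilon])$ is nonzero, equivalently the corresponding element of $\Ext^1_\A(V,V)$ under (\ref{hoch}) is nonzero. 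For (ii) I would observe that the short exact sequence of $\A$-modules $0 \to tV[\ell]/t^2V[\ell] \to V[\ell]/t^2V[\ell] \to V[\ell]/tV[\ell] \to 0$ is, up to the isomorphisms $tV[\ell]/t^2V[\ell] \cong V[1]$ and $V[\ell]/tV[\ell] \cong V[1]$ established in Claim~2, exactly the almost split sequence $0 \to V \to V[2] \to V \to 0$ (for $\ell = 2$ this is immediate, and for $\ell > 2$ the sequence is the pushout/pullback restriction that still represents the generator of $\Ext^1_\A(V,V)$ because that $\Ext$-group is one-dimensional and the extension is visibly nonsplit — $t$ acts nontrivially on $V[\ell]/t^2V[\ell]$). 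Hence the tangent vector is the nonzero one, and the surjectivity argument goes through uniformly in $\ell$. Once this is in hand, the limit argument is routine.
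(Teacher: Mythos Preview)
Your argument is correct and reaches the same conclusion, but by a genuinely different route from the paper. The paper constructs the inverse limit $W=\invlim_\ell V[\ell]$ directly as a lift of $V$ over $\k[\![t]\!]$ (using Mittag--Leffler to pass the short exact sequences (\ref{seslong}) through the limit), obtains from versality a single morphism $\theta:R(\A,V)\to\k[\![t]\!]$, checks that $\theta$ is surjective by reducing modulo $t^2$, and concludes that $\theta$ is an isomorphism since $R(\A,V)$ is already known to be a quotient of $\k[\![t]\!]$. You instead stay entirely at the Artinian level: from the family of surjections $\psi_\ell:R(\A,V)\twoheadrightarrow\k[\![t]\!]/(t^\ell)$ you argue, writing $R(\A,V)\cong\k[\![t]\!]/I$ and using that the nonzero ideals of the DVR $\k[\![t]\!]$ are the $(t^m)$, that $I\subseteq\bigcap_\ell(t^\ell)=0$. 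Your approach avoids the inverse-limit construction and the verification that $W$ is free over $\k[\![t]\!]$; the paper's approach has the advantage of exhibiting the versal lift explicitly.

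One small imprecision: your justification that the reduction of $[M_\ell,\phi_\ell]$ modulo $t^2$ is a nontrivial deformation cannot be ``$t$ acts nontrivially on $V[\ell]/t^2V[\ell]$'', since $t$ also acts nontrivially on the trivial lift $\k[\epsilon]\otimes_\k V$. What is needed is that the underlying $\A$-module extension $0\to V\to V[\ell]/t^2V[\ell]\to V\to 0$ is nonsplit, i.e.\ that $V[\ell]/t^2V[\ell]$ is indecomposable as a $\A$-module. This does hold: the mesh relation $\pi_{\ell-1}\circ\iota_{\ell-1}=\iota_{\ell-2}\circ\pi_{\ell-2}$ gives $t^2V[\ell]=\iota_{\ell-1}\iota_{\ell-2}(V[\ell-2])$, and in the homogeneous tube the cokernel $V[\ell]/V[\ell-2]$ is $V[2]$ (the paper uses the same identification $W/t^2W\cong V[2]$ with equally implicit justification). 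With this correction your surjectivity argument for $\psi_\ell$ goes through, and the rest is sound.
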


\begin{proof}
Observe that the collection of right $\A$-modules $\{V[\ell]\}_{\ell \geq 1}$ with the surjective irreducible morphisms $\pi_{\ell-1}: V[\ell]\to V[\ell -1]$  induces an inverse system that satisfies the Mittag-Leffler condition. Thus by using \cite[Prop. III.10.3]{lang} and applying $\invlim_\ell$ to the short exact sequence (\ref{seslong}), we obtain a short exact sequence of right $\A$-modules 
\begin{equation*}
0\to tW\to W\to V[1]\to 0, 
\end{equation*}
where $W=\invlim_\ell V[\ell]$. In particular $W$ is naturally a $\k[\![t]\!]\otimes_\k\A$-module that is free over $\k[\![t]\!]$ such that $\k\otimes_{\k[\![t]\!]}W\cong W/tW\cong V[1]$, i.e,  $W$ is a lift of $V[1]$ over $\k[\![t]\!]$. Therefore, there exists a $\k$-algebra homomorphism $\theta: R(\A,V[1])\to  \k[\![t]\!]$ in $\widehat{\Ca}$ which corresponds to the versal deformation induced by $W$. Note that since $W/t^2W\cong V[2]$ as $\A$-modules, we obtain that $W/t^2W$ defines a non-trivial lift of $V[1]$ over $\k[\![t]\!]/(t^2)$ and thus there exists a unique morphism $\theta':R(\A,V[1])\to \k[\![t]\!]/(t^2)$. Note that since the cotangent space (as Remark \ref{rem0} (ii)) of $\k[\![t]\!]/(t^2)$ is $1$-dimensional over $\k$, it follows that $\theta'$ is also surjective. Moreover, if $\theta'':\k[\![t]\!]\to \k[\![t]\!]/(t^2)$ is the canonical projection, it follows by the uniqueness of $\theta'$ that $\theta'=\theta''\circ \theta$. Thus again by Remark \ref{rem0} (ii), since $(\theta'')^\ast$ is an isomorphism, we obtain that $\theta^\ast$ and thus $\theta$ is surjective. Therefore by using that  $R(\A,V_[1])$ is a quotient of $\k[\![t]\!]$, we conclude that $\theta$ is an isomorphism. This proves Claim \ref{claim3} and finishes the proof of Theorem \ref{thm1}.
\end{proof}

\subsection{Proof of Corollary \ref{cor1}}\label{subsec2}
In the following we assume that $\k$ is algebraically closed. Assume that $\A = \k Q/ \langle \rho\rangle$ is a special biserial algebra and $b = \alpha_1\cdots \alpha_r$ is a band as described in e.g. \cite[\S 2.2]{schroll2}, where for each $1\leq k\leq r$, $\alpha_k$ is either an arrow or the formal inverse of an arrow of $Q$. In particular, for all $1\leq k\leq r$, we let $v_k$ be the vertex in $Q$ where $\alpha_k$ starts. For all $\lambda \in \k^\ast$ and integers $m\geq 1$, the band $\A$-module $V(b, \lambda, m)$ is the representation of the bound quiver  $(Q,\rho)$ defined as follows. For each vertex $v \in Q_0$, $V(n, \lambda, m)_v$ has a copy of $\k^m$ for each $0\leq k\leq r-1$ such that $v_k = v$. For each $1\leq k \leq r-1$, the linear map $V(b, \lambda, m)_{\alpha_k}$ sends the copy of $\k^m$ associated with the vertex where $\alpha_k$ starts to that associated to where $\alpha_k$ ends via the identity matrix of order $m$. Finally, the linear map $V(b,\lambda, m)_{\alpha_r}$ sends the copy of $\k^m$ associated to the vertex where $\alpha_r$ to that of where $\alpha_r$ ends via the matrix $J_m(\lambda)$ provided that $\alpha_r$ is an arrow, or via the matrix $J_m(\lambda^{-1})$ if $\alpha_r$ is the formal inverse of an arrow, where $J_m(\lambda)$ denotes the $m\times m$ Jordan block with eigenvalue $\lambda$. It follows from e.g. \cite[\S II.4]{erdmann} the band modules lie in homogeneous tubes and in particular the band $\Lambda$-modules $V(b, \lambda, 1)$ lie in the mouth of its homogeneous tube. The following result is straightforward but we decided to include it for completeness. 

\begin{lemma}\label{lem1}
For all integers $m\geq 1$, there exists a short exact sequence of $\A$-modules
\begin{equation*}
    0\to V(b,\lambda, m-1)\xrightarrow{g} V(b,\lambda, m)\xrightarrow{f} V(b,\lambda,1)\to 0.
\end{equation*}
\end{lemma}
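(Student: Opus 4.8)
The plan is to construct the maps $g$ and $f$ explicitly at the level of representations of the bound quiver $(Q,\rho)$, exploiting the block structure of the matrices defining the band modules $V(b,\lambda,m)$. First I would recall that for each vertex $v\in Q_0$, the space $V(b,\lambda,m)_v$ is a direct sum of copies of $\k^m$, one for each $k$ with $0\le k\le r-1$ and $v_k=v$, and that for $1\le k\le r-1$ the structure map along $\alpha_k$ is an identity matrix $I_m$ (suitably placed among the blocks), while the structure map along $\alpha_r$ is the Jordan block $J_m(\lambda)$ (or $J_m(\lambda^{-1})$). Writing $\k^m$ with basis $e_1,\dots,e_m$, I would let $g\colon V(b,\lambda,m-1)\to V(b,\lambda,m)$ be, in each such copy, the inclusion $\k^{m-1}\hookrightarrow\k^m$ onto the span of $e_1,\dots,e_{m-1}$, and let $f\colon V(b,\lambda,m)\to V(b,\lambda,1)$ be, in each copy, the projection $\k^m\twoheadrightarrow\k$ onto the span of $e_m$ (i.e.\ $e_i\mapsto 0$ for $i<m$ and $e_m\mapsto e_1$). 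These are defined uniformly across all the blocks at every vertex.

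Next I would check that $g$ and $f$ are morphisms of representations, i.e.\ that they commute with every structure map $V(b,\lambda,m)_{\alpha_k}$. For $1\le k\le r-1$ this is immediate since the structure maps are identity matrices, so commutativity with $g$ and $f$ reduces to the obvious fact that $I_m$ restricts to $I_{m-1}$ on the first $m-1$ coordinates and descends to $I_1$ on the last coordinate. The only point requiring a line of computation is the arrow $\alpha_r$: here one uses that the Jordan block $J_m(\lambda)$ is block upper triangular with $J_{m-1}(\lambda)$ in the top-left corner and $\lambda$ (together with the superdiagonal $1$) in the last column, so $J_m(\lambda)$ preserves $\langle e_1,\dots,e_{m-1}\rangle$ and induces $J_{m-1}(\lambda)$ there, giving compatibility with $g$; and modulo $\langle e_1,\dots,e_{m-1}\rangle$ it acts on $e_m$ as multiplication by $\lambda$, matching $J_1(\lambda)=(\lambda)$, giving compatibility with $f$. (The case where $\alpha_r$ is a formal inverse is identical with $\lambda$ replaced by $\lambda^{-1}$.)

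Finally I would verify exactness. At each vertex, the sequence of $\k$-vector spaces is a direct sum over the relevant blocks of the sequence $0\to\k^{m-1}\xrightarrow{\ \text{incl}\ }\k^m\xrightarrow{\ \text{proj}\ }\k\to 0$, which is plainly short exact; hence the sequence of representations is short exact as well. I do not anticipate a genuine obstacle here — the statement is, as the authors note, straightforward — but the one place to be careful is bookkeeping: making sure the chosen splitting $e_m\mapsto$ (last coordinate) is respected simultaneously by $J_m(\lambda)$ and by all the identity maps, and that the indexing of blocks at a vertex (when several $v_k$ coincide) does not interfere, since $g$ and $f$ act the same way on each block. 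Once these compatibilities are recorded, the lemma follows, and together with the discussion preceding it this supplies the short exact sequence \eqref{seslong} needed to invoke Theorem \ref{thm1} for band modules, completing the proof of Corollary \ref{cor1}.
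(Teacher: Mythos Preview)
Your proof is correct and follows essentially the same approach as the paper's: an explicit construction of $f$ (and $g$) via coordinate projections and inclusions on each block $\k^m$, using the block-triangular structure of the Jordan matrix. The only difference is a convention --- the paper takes $f_v=[1,0,\dots,0]$ (projection onto the first coordinate, implicitly a lower-triangular Jordan block), whereas you project onto the last coordinate with the upper-triangular convention; your write-up is also more detailed in verifying compatibility with $\alpha_r$ and exactness, which the paper leaves implicit.
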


\begin{proof}
We define $f: V(b, \lambda, m)\to V(b, \lambda, 1)$ as follows. For each vertex $v \in Q_0$, we let $f_v: V(b, \lambda, m)_v\to V(b, \lambda, 1)_{v}$ be the linear map represented by the $1\times m$-matrix $[1,0,\ldots,0]$. In this way $f$ is surjective and such that $\ker f \cong V(b, \lambda, m-1)$. This proves Lemma \ref{lem1}.
\end{proof}

Thus, Corollary \ref{cor1} is a direct consequence of Theorem \ref{thm1} and Lemma \ref{lem1}.

\subsection{Proof of Corollary \ref{cor2}} 
According to \cite[Thm. 5.1]{schtreval}, there exists a band right $\Lambda$-module $V:=V(b,\beta, 1)$ (for some band $b$) that is a brick, meaning $\End_\A(V) \cong \k$. Consequently, for every $\lambda \in \k^\ast$,  $V(b,\lambda, 1)$ is also a brick. Since these band $\Lambda$-modules $V(b, \lambda, 1)$ lie in the mouth of their homogeneous tube (see, e.g., \cite[\S II.4]{erdmann}), the conclusion directly follows from Corollary \ref{cor1}.

%\section{Acknowledgments}
%The authors would like to...

\section{Funding}
The second author was partially supported by CODI (Universidad de Antioquia, UdeA) by project numbers 2022-52654, 2023-62291 and 2024-76672. The third author was supported by the research group CIMI in the Centro de Investigaciones at the Fundaci\'on Universitaria Kornad Lorenz, and by the Office of Academic Affairs at the Valdosta State University. 
%    Bibliographies can be prepared with BibTeX using amsplain,
%    amsalpha, or (for "historical" overviews) natbib style.
\bibliographystyle{amsplain}
\bibliography{UniversalDefRingsTubes}   

\end{document}